\newtheorem{sbthm}[subsubsection]{Theorem}
\newtheorem{sbcor}[subsubsection]{Corollary}
\newtheorem{sblem}[subsubsection]{Lemma}
\newenvironment{pf}{\proof[\proofname]}{\endproof}
\theoremstyle{definition}
\newtheorem{sbdefn}[subsubsection]{Definition}
\newtheorem{sbeg}[subsubsection]{Example}
\newtheorem{sbrem}[subsubsection]{Remark}
\theoremstyle{definition}
\theoremstyle{remark}
\newcommand{\tit}{\textit}
\newcommand{\cH}{{\mathcal{H}}}
\newcommand{\sH}{{\mathscr{H}}}
\newcommand{\al}{\alpha}
\newcommand{\be}{\beta}
\newcommand{\ga}{\gamma}
\newcommand{\si}{\sigma}
\newcommand{\ze}{\zeta}
\newcommand{\Z}{\mathbb Z}
\newcommand{\N}{\mathbb N}
\newcommand{\sub}{\subset}
\newcommand{\nin}{\notin}
\newcommand{\ol}{\overline}
\newcommand{\dis}{\displaystyle}
\newcommand{\Rar}{\Rightarrow}
\newcommand{\lb}{\left(}
\newcommand{\rb}{\right)}
\newcommand{\F}{\mathbb{F}}
\newcommand{\js} {\mathcal{J}_{\si}}
\newcommand{\z}{\mathfrak{z}}
\newcommand{\m}{\mathfrak{m}}
\DeclareMathOperator{\tr}{Tr}
\DeclareMathOperator{\ch}{char}
\DeclareMathOperator{\Gal}{Gal}
\DeclareMathOperator{\Sw}{Sw}
\DeclareMathOperator{\Tr}{Tr}
\DeclareMathOperator{\KSw}{KSw}
\let\c@equation\c@thm
\numberwithin{equation}{section}
\begin{document}

\title{Degree $p$ Extensions of Arbitrary Valuation Rings and ``Best $f$"}

\author{Vaidehee Thatte}

\date{}
\maketitle

\begin{abstract} 

We  prove the explicit characterization of the so-called ``best f" for degree $p$ Artin-Schreier and degree $p$ Kummer extensions of Henselian valuation rings in residue characteristic $p$. This characterization is mentioned briefly in \cite{V1, V2}. Existence of best $f$ is closely related to the defect of such extensions and this  characterization plays a crucial role in understanding their intricate structure.
We also treat degree $p$ Artin-Schreier defect extensions of higher rank valuation rings, extending the results in \cite{V1}, and thus completing the study of degree $p$ extensions that are the building blocks of the general theory. 

\end{abstract}

\tableofcontents

\section{Introduction}\label{s:in}

\subsection{Outline} \indent We will now explain the notation (\ref{N1}), discuss the obstacles in the general case and the notion of defect (\ref{obstacles}), and explain the set-up for degree $p$ extensions (\ref{sup}) that will be used in the rest of this paper.
In \S\ref{swf}, we review the classical Swan conductor (\ref{csw}) as well as Kato's Swan conductor and the notion of ``best $f$" for discrete valuation rings with possibly imperfect residue fields (\ref{Ksw}).
In \S\ref{chbest}, we state and prove the characterization of best $f$ in the general case for ``defectless'' degree $p$ Artin-Schreier extensions ({\bf \Cref{best f}}) and Kummer extensions ({\bf \Cref{best h}}), in residue characteristic $p$. Finally in \S \ref{high}, we discuss the case of  Artin-Schreier extensions with higher rank valuations and non-trivial defect. In particular, we prove \cite[Theorem 0.3]{V1}, without any restrictions on the rank of the valuation ({\bf \Cref{hn}}).  This allows us to extend all the key results of \cite{V1} to the higher rank defect case.

\subsection{Notation}\label{N1} 
 Let $A$ be a Henselian valuation ring that is not a field, and let $K$ be the field of fractions of $A$. Suppose $\Gamma_A$ denotes the value group $K^\times/A^\times$ of $A$ and $v_A$ denotes the corresponding  additive valuation on $K$.   The valuation ring  $A$ consists of elements with non-negative valuation and is partitioned into the set of  its units $A^{\times}$ (elements of valuation $0$) and its unique maximal ideal  ${\mathfrak m}_A$ (elements with strictly positive valuation). Let $k$ denote the residue field $A/\m_A$ of $A$.   For a finite Galois extension $L$ of $K$, we denote the integral closure of $A$ in $L$ by $B$. Since $A$ is Henselian, $v_A$ extends uniquely to a valuation $v_B$ on  $L$, making $B$ a Henselian valuation ring as well. The index $[\Gamma_B:\Gamma_A]$ is called the {\it ramification index} and is denoted by $e_{L/K}$. Let $l$ denote the residue field of $L$. The degree $[l:k]$ of the residue extension is called the {\it inertia degree} and is denoted by $f_{L/K}$.

\subsection{Classical versus general case and the defect}\label{obstacles}
In the classical case, we consider a complete discrete valuation ring $A$ with a perfect residue field $k$. In the general case, we consider Henselian valuation rings and remove all the other restrictions on the valuation. Some of the immediate challenges we face due to this change are discussed below. We also recall the definition of the defect in the third point.

\begin{enumerate}
\item The ideals of $A$ and $B$ need not be principal. A non-principal ideal cannot be finitely generated.

\item The ring  $B$ may not be finitely generated as an $A$-algebra; classically it is monogenic. 

\item  There is a positive integer $d_{L/K}$, called the \tit{defect} of the extension, such that   $$[L:K]=d_{L/K}e_{L/K}f_{L/K}.$$ The extension $L/K$ is called \tit{defectless} if the defect is trivial, that is, if $d_{L/K}=1$.\\ 
 When $\ch k = 0$, there is no defect.\\ When  $\ch k = p>0$, the defect is a non-negative integer power of $p$, and it could be non-trivial.\\ The classical case is  defectless, regardless of the residue characteristic. 
 \end{enumerate}
 One may see \cite{Ku} for further discussion on the defect.
\begin{sbeg}[Classical]\label{clex}

Let $K= \F_p((X))$ and let $L=K(\al)$ be the extension generated by the roots of $\al^p-\al=1/X^n$, where $n$ is a positive integer co-prime to $p$. In this case, $e_{L/K}=p$ and $d_{L/K}=1=f_{L/K}$. In particular, the extension is defectless.
\end{sbeg}
\begin{sbeg}[Defect]\label{defex} Let $\dis K=\cup_{r \in \Z_{\geq 0}} \F_p((X))(X^{1/p^r})$ and let $L=K(\al)$ be the extension generated by the roots of $\dis \al^p-\al=1/X$. In this case, $d_{L/K}=p$ and $e_{L/K}=1=f_{L/K}$. This extension is  ``almost unramified". A family of much worse (and hence, more interesting) examples created by a sequence of blow-ups is available in the appendix of \cite{V1}.
\end{sbeg}

\subsection{Degree $p$ set-up}\label{sup}
 In  this entire paper (unless mentioned otherwise), we focus on degree $p$ extensions $L/K$ for a prime $p$, such that the residue characteristic $\ch k$ is $p$. 
 
 Then $L/K$ is either \tit{unramified} ($l/k$ is separable of degree $p$), \tit{wild} (ramification index is $p$), \tit{ferocious} ($l/k$  is purely inseparable of degree $p$), or has non-trivial defect. The Galois group $\Gal(L/K)$  is cyclic of order $p$, with any non-trivial element acting as its generator, fix one such generator  $\si$.\\
Depending on $\ch K$, we have the following two cases.
\\
{\bf Equal characteristic case ($\ch K =p$):}  
In this case, $L=K(\al)$ is an Artin-Schreier extension where $\al \in L \backslash K$ satisfies the polynomial $\al^p-\al=f$ for some $f \in K^{\times}$. 
\\
{\bf Mixed characteristic case ($\ch K =0$ and $p \in \m_A$):}
 If $K$ contains a primitive $p$-th root $\ze$ of unity, $L=K(\al)$ is a Kummer extension defined
by $\al^p =h$ for some $h \in K^{\times} $where $\al \in L \backslash K$.
The non-Kummer case ($\ze \nin K$) is reduced to the Kummer case (see  \cite[\S7]{V2}).

\section{Swan conductor and ``best $f$" for DVRs}\label{swf}

\subsection{Classical Swan conductor $\Sw_{L/K}$}\label{csw} In the classical case of complete discrete valuation rings with perfect residue fields, we have two important  invariants of wild ramification, namely the logarithmic Lefschetz
number $j(\si)$ and the \tit{Swan conductor  of $L/K$} defined as follows. Note that  $j(\si)$ is independent of the choice of the generator $\si$ of $\Gal(L/K)$.
 \begin{sbdefn}\label{swan}
$$j(\si):=\min\left\{v_B\lb\frac{\si(a)}{a} -1\rb\mid a \in L^{\times} \right\}~~ \text{ and } ~~\Sw_{L/K}:=\frac{j(\si) p}{e_{L/K}}.$$
\end{sbdefn}

\begin{sbrem}Both can be defined for a degree $n$ extension \cite{S}, we have restricted ourselves to degree $p$ in this paper. Their non-logarithmic analogs are omitted as well, since we do not use them in this paper.\end{sbrem}

\subsection{Kato's Swan conductor $\Sw_{L/K}$ for DVRs and ``best $f$''}\label{Ksw}
{\bf Motivating example:} Consider \cref{clex} where $K= \F_p((X))$ and $L=K(\al)$ is the extension generated by the Artin-Schreier equation $\al^p-\al=1/X^n; p \nmid n \in \N$. In this case, one can check that $\Sw_{L/K}$ is in fact $n$, the same as the order of the pole of $1/X^n$. 

Many other Artin-Schreier polynomials in $K[T]$ give rise to the same extension (see \ref{allf}). In particular, any polynomial of the form $\dis T^p-T=(1/X^n)+g^p-g; g\in K$ generates the same extension $L$ since its roots are of the form $\al+g$. One such example is $\dis T^p-T=1/X^{np}$ where $\dis g=1/X^n$. However, $1/X^n$ (when $g=0$) is the one minimizing the order of the pole.

\begin{sbdefn}\label{ksw} Kato gave the following definition of the Swan conductor $\KSw_{L/K}$ for Artin-Schreier extensions of discrete valuation rings with possibly imperfect residue fields. 
 $$\KSw_{L/K}:=\min \{v_A(1/h) \mid   \text{ and the solutions of
the equation } T^p-T=h \in K^{\times} \text{ generate } L/K \}.$$ 
An element $f$ of $K$ which attains this minimum is called \tit{best $f$}. The existence of best $f$ is guaranteed since the value group is $\Z$. \end{sbdefn}

\noindent In the classical case, $\KSw$ coincides with $\Sw$. This can be seen as follows. Let $\al$ be an  Artin-Schreier generator corresponding to best $f$ and let $N_{L/K}$ denote the norm map. Then $j(\si)=v_B(1/\al)$, and by \cref{swan}, we have the following.
\subsubsection{Connection via $N_{L/K}$:}\label{Kswan} $$\KSw_{L/K}= v_A(1/f)=v_A(N_{L/K}(1/\al))=\frac{v_B(1/\al) p}{e_{L/K}}=\frac{j(\si) p}{e_{L/K}}= \Sw_{L/K}$$

\noindent In the above equation, $v_A(1/f)$ gives us a $K$-side description of $\Sw$ while $(v_B(1/\al) p)/e_{L/K}$ gives an $L$-side description. The two are connected via the norm map. We generalize this connection later in \Cref{hn}.

\begin{sbrem} The mixed characteristic analogue for Kummer extensions is defined in a similar fashion and also satisfies this.\end{sbrem}

\section{Characterization of Best $f$}\label{chbest}
Now we return to  the general case and use the notation in \ref{N1}, \ref{sup}.
\\\\
As we will see in \Cref{best f} and \Cref{best h}, when the extension is defectless, we can tackle the first two obstacles in the general case (\ref{obstacles}) by using best $f$ and best $h$, respectively. Explicit descriptions of these best generators are also very useful in understanding the structure of the extensions. We note that such characterization has appeared in \cite[3.6]{K2} and \cite[Lemma (2-16)]{OH} for  discrete valuation rings.
\subsection{Cyclic extensions of prime degree.} 
In \cite{V1} we proved the following lemmas for $L/K$ as in \ref{sup} assuming $\ch K=p$, but we did not use this in the proofs (we just needed $\ch k = p$). Hence, the same results are available in the mixed characteristic as well. The statements (using the notation in this paper) are as follows, and they correspond to Lemma 1.11 and Lemma 1.12 of \cite{V1}, respectively.

\begin{sblem}\label{cyclic}
If $L/K$ is ramified and defectless, then we have two cases:

\begin{enumerate}[(a)]
\item Order of $\Gamma_B/ \Gamma_A$  is $p$  and it is
generated by $v_B(\mu)$ for some $\mu \in L^{\times}$.
\item There is some $\mu \in B^{\times}$ such that the residue
extension $l/k$ is purely inseparable of
degree $p$, generated by the residue class of $\mu$.
\end{enumerate}
\end{sblem}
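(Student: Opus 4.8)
My plan is to reduce everything to the fundamental equality $[L:K]=d_{L/K}e_{L/K}f_{L/K}$ recalled in \ref{obstacles}. Since $[L:K]=p$ is prime and the extension is defectless, this gives $e_{L/K}f_{L/K}=p$; the possibility $e_{L/K}=f_{L/K}=1$ is excluded because $[L:K]>1$, so exactly one of $e_{L/K},f_{L/K}$ equals $p$ and the other equals $1$. I would then argue that $e_{L/K}=p$ produces case (a) and $f_{L/K}=p$ produces case (b), which --- since these are the only two options --- proves the lemma.

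\emph{The case $e_{L/K}=p$.} By the definition of the ramification index (\ref{N1}), $\Gamma_B/\Gamma_A$ is a group of order $p$, hence cyclic, with every nonzero element a generator. As $\Gamma_B\neq\Gamma_A$, I would pick $\gamma\in\Gamma_B\setminus\Gamma_A$; since $v_B\colon L^{\times}\to\Gamma_B$ is surjective, $\gamma=v_B(\mu)$ for some $\mu\in L^{\times}$, and then $v_B(\mu)$ generates $\Gamma_B/\Gamma_A$. This is precisely (a).

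\emph{The case $f_{L/K}=p$.} Here $[l:k]=p$. I would let $k_0$ denote the separable closure of $k$ inside $l$; by the tower law $[k_0:k]$ divides $p$, hence is $1$ or $p$. If it were $p$, then $l=k_0$ would be separable over $k$ of degree $p=[L:K]$, so $L/K$ would be unramified (\ref{sup}), contrary to hypothesis. Therefore $[k_0:k]=1$, i.e. $l/k$ is purely inseparable of degree $p$; such an extension is simple, so $l=k(\bar\mu)$ for any $\bar\mu\in l\setminus k$. Finally, since $B$ is a valuation ring with maximal ideal $\m_B$ and residue field $l$, any nonzero residue class lifts to an element of $B\setminus\m_B=B^{\times}$; choosing such a lift $\mu$ of $\bar\mu$ gives (b).

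The argument is essentially bookkeeping, and I do not expect a serious obstacle. The only step needing a moment's care is the residue-field case: one must invoke that a degree-$p$ extension in characteristic $p$ is automatically separable or purely inseparable, and note that the separable alternative is impossible here precisely because it would force $L/K$ to be unramified.
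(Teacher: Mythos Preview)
Your argument is correct. The paper does not actually prove this lemma here; it merely records the statement and points to \cite[Lemma~1.11]{V1}, remarking that the proof there only used $\ch k=p$ and so carries over to mixed characteristic. Your direct proof from the fundamental equality $[L:K]=d_{L/K}e_{L/K}f_{L/K}$ is the natural one: once $d_{L/K}=1$ forces $e_{L/K}f_{L/K}=p$, primality splits the problem into the two cases, and the only nontrivial step is ruling out that $l/k$ is separable in case~(b), which you handle correctly by appealing to the paper's definition of ``unramified'' in \ref{sup}. One small redundancy: having already deduced $e_{L/K}f_{L/K}=p$, you need not separately exclude $e_{L/K}=f_{L/K}=1$; that is automatic.
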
 
\noindent For such an extension $L/K$, these two cases correspond to the types wild ($e_{L/K}=p$) and ferocious (ramified and $f_{L/K}=p$), respectively. 
\\
In \cite[Lemma 1.11(b)]{V1} we state $\mu \in L^\times$. But since we consider its residue class, it needs to come from $B$, so it is in fact an element of $B^{\times}$.
\begin{sblem}\label{mu}
Let $L/K \text{ and } \mu$ be as in \Cref{cyclic} and $x_i \in K$ for all $0
\leq i \leq p-1$ . Then

$$ \sum_{i=0}^{p-1} x_i \mu^{i} \in B \iff x_i \mu^i \in B \text{ for all } i.$$.
\end{sblem}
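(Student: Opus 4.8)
The plan is to exploit the fact that, in both the wild and ferocious cases of \Cref{cyclic}, the element $\mu$ has a very rigid valuation-theoretic behavior that forces $1, \mu, \dots, \mu^{p-1}$ to lie in distinct cosets of $\Gamma_A$ in $\Gamma_B$ (in the wild case) or to have residues that are linearly independent in a suitable sense over $k$ (in the ferocious case). The direction $(\Leftarrow)$ is trivial since $B$ is a ring, so all the content is in $(\Rightarrow)$. I would treat the two cases of \Cref{cyclic} separately.

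In the wild case, $v_B(\mu)$ generates $\Gamma_B/\Gamma_A$, which has order $p$; hence for $0 \le i \le p-1$ the values $v_B(x_i \mu^i) = v_A(x_i) + i\, v_B(\mu)$ lie in pairwise distinct cosets of $\Gamma_A$ whenever the $x_i$ are nonzero, so in particular these valuations are pairwise distinct. Therefore $v_B\!\left(\sum_i x_i \mu^i\right) = \min_i v_B(x_i \mu^i)$, the minimum being attained uniquely. If the sum lies in $B$, this minimum is $\ge 0$, which forces $v_B(x_i\mu^i) \ge 0$ for every $i$ with $x_i \ne 0$, i.e. $x_i \mu^i \in B$ for all $i$. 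In the ferocious case, $v_B(\mu) = 0$ and the residue $\bar\mu$ generates a purely inseparable degree-$p$ extension $l/k$; so $1, \bar\mu, \dots, \bar\mu^{p-1}$ form a $k$-basis of $l$. Suppose $\sum_i x_i \mu^i \in B$ but some $x_j \mu^j \notin B$, i.e. $v_A(x_j) < 0$ for some $j$. Let $m = \min_i v_A(x_i) < 0$, pick $t \in K^\times$ with $v_A(t) = -m$ (possible since $m \in \Gamma_A$), and consider $t\sum_i x_i\mu^i = \sum_i (t x_i)\mu^i$: now all $t x_i \in A$, not all in $\m_A$, and the element still lies in $B$ with $v_B \ge v_A(t) > 0$, hence its residue in $l$ is $0$. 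But that residue is $\sum_i \overline{t x_i}\,\bar\mu^i$, a $k$-linear combination of the basis $\{\bar\mu^i\}$ with not all coefficients zero — contradiction. Hence $v_A(x_i) \ge 0$ for all $i$, i.e. $x_i \mu^i \in B$ for all $i$.

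The main obstacle, and the only place needing genuine care, is making sure the valuations $v_A(x_i) + i\,v_B(\mu)$ really are forced into distinct $\Gamma_A$-cosets in the wild case: this is exactly where defectlessness enters, via \Cref{cyclic}(a), which guarantees the cyclic group $\Gamma_B/\Gamma_A$ has order precisely $p$ and is generated by the class of $v_B(\mu)$, so that $i \mapsto i\,v_B(\mu) \bmod \Gamma_A$ is injective on $\{0,\dots,p-1\}$. Once that is in place, the ultrametric inequality upgrades to an equality (minimum attained uniquely), and the rest is immediate. I would also remark that this lemma is the technical device that lets one pass freely between the ring $B$ and the ``monomial'' decomposition $B = \bigoplus_i (B \cap K\mu^i)$, which is what substitutes for monogenicity in the general setting and is used repeatedly in the sequel.
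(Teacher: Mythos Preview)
Your proof is correct. The paper does not actually prove this lemma here---it merely cites \cite[Lemma~1.12]{V1}---so there is no in-paper argument to compare against; your case split into wild (distinct $\Gamma_A$-cosets forcing a unique minimum in the ultrametric inequality) and ferocious (scaling by $t$ with $v_A(t)=-\min_i v_A(x_i)$ and using linear independence of $1,\bar\mu,\dots,\bar\mu^{p-1}$ over $k$) is the natural and standard approach, and each step is sound.
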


\begin{sbrem}\label{muval}
    Let $L/K, \mu$ be as in \Cref{cyclic}$(b)$. 
    
    Then by \Cref{mu}, $B=A[\mu]$.
    Since $e_{L/K}=1$, (let $v$ denote the valuation on both $L$ and $K$) for all $x \in \in L^\times$ we have 
    $v(x)= \sup\{v(a) \mid a \in K^\times, a^{-1}x \in B\}.$
  
Let $\dis x=\sum_{i=0}^{p-1} x_i \mu^{i}$; $x_i \in K$ for $0 \leq i \leq p-1$.  Using  $B=A[\mu]$, we have $$v(x)=\min_{0\leq i\leq p-1} v(x_i \mu^{i})=\min_{0\leq i\leq p-1} v(x_i).$$
\end{sbrem}
\subsection{Equal characteristic case: Artin-Schreier Extensions}\label{eq}

\noindent For this subsection we assume that $\ch K= \ch k = p$ and that $L=K(\alpha)$ is a non-trivial Artin-Schreier extension generated by the equation $\alpha^p-\alpha = f \in K$. By slight abuse of terminology, we will say that $f$ generates $L/K$.

\subsubsection{}\label{allf} Given any two $f$ and $f'$ generating the same Artin-Schreier extension $L/K$, there exists  $h \in K$ and an integer $i \in \{1, \cdots, p-1\}$ such that $f'=i(f+h^p-h)$.
Note that since $A$ is Henselian, if we can find an  $f \in \m_A$, then the resulting Artin-Schreier extension $L/K$ is trivial. Thus, we may assume that any such $f \in K \backslash \m_A$.
\begin{sbdefn}
    
An element $f \in K$ generating $L/K$ as above is called \tit{best} if and only if $$  v_A\lb\frac{1}{f}\rb=\inf_{h \in K^{\times}, 1 \leq i \leq p-1 } v_A\lb\frac{1}{f+h^p-h}\rb$$
\end{sbdefn}

If  $f$ is not best, then a ``better'' generator with higher valuation exists. That is,  there exists some $h \in K$ such that $v_A(f+h^p-h)>v_A(f)$. We observe that this is only possible when $v_A(f)=v_A(h^p-h)=pv_A(h)<0$.

\begin{sbthm}\label{best f} The following are equivalent for a non-trivial extension $L/K$:

\begin{enumerate}[(a)]
\item  Best $f$ exists.
\item There exists $f$ satisfying exactly one of the properties (i) -- (iii).
\begin{enumerate}[(i)]
\item $f\notin A$ and the class of $f$ in $K^\times/A^\times$ is not a $p$-th power, that is, the valuation $v_A(f)$ is not divisible by $p$.

\item $f\notin A$ and $f=ug^{-p}$ for some $0 \neq g\in \m_A$ and $u\in A^{\times}$ such that the residue class of $u$ is not a $p$-th power in $k$. 

\item $f\in A^{\times}$ and the residue class $\bar{f}$ of $f$ does not belong to $\{x^p-x\; \mid \; x\in k\}$. 
\end{enumerate}
\item The extension $L/K$ is defectless.
\end{enumerate}
\end{sbthm}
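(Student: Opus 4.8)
The plan is to prove the chain of equivalences $(a)\Leftrightarrow(b)\Leftrightarrow(c)$ by establishing $(b)\Rightarrow(a)$, $(a)\Rightarrow(c)$ is packaged together with $(c)\Rightarrow(b)$, and a direct argument that the three normal forms in $(b)$ are precisely the ones with no better generator. Throughout I will use the observation recorded just before the theorem: a generator $f$ can be improved \emph{only} when $v_A(f)=pv_A(h)<0$ for some $h\in K^\times$, i.e.\ when $v_A(f)<0$ is divisible by $p$ and, after scaling by such an $h^p$, the leading residue becomes a $p$-th power or (in the borderline valuation-zero case) of Artin-Schreier type. This gives a clean dichotomy to run the induction/descent on.

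First I would prove $(b)\Rightarrow(a)$: if $f$ is in one of the normal forms (i)--(iii), then $f$ is already best. In case (i) no $h$ can change $v_A(f)$ since $v_A(h^p-h)\in p\Z$ whenever it is negative, and adding something of strictly larger valuation cannot lower $v_A(f)$; so the infimum is attained at $f$. In case (ii), any competing $h$ with $v_A(h^p-h)=v_A(f)$ forces $v_A(h)=v_A(f)/p$, and comparing leading terms the residue of $u$ would have to be a $p$-th power in $k$ modulo the relevant lower-order corrections — contradicting the hypothesis on $\bar u$; hence again $f$ is best. Case (iii) is the analogous borderline computation with $v_A(f)=0$, where improving would require $\bar f\in\{x^p-x\}$. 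This step is essentially the ``only if'' direction of the improvement criterion made quantitative, and it is routine once the leading-term bookkeeping is set up.

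Next, $(a)\Rightarrow(c)$ and $(c)\Rightarrow(b)$. For $(c)\Rightarrow(b)$, suppose $L/K$ is defectless; then by \Cref{cyclic} it is wild or ferocious. In the wild case $v_B$ restricted to a uniformizing-type element shows $p\mid\!\!\!\!/\ v_A(f)$ can be arranged, or else one runs the improvement step finitely often — here defectlessness is exactly what guarantees the descent terminates, landing in form (i) or (ii). In the ferocious case one uses \Cref{mu} and \Cref{muval}: writing the best generator's relation in the basis $1,\mu,\dots,\mu^{p-1}$, the valuation is computed coordinatewise, and the extension being ferocious pins the residue data into form (ii) or (iii). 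For $(a)\Rightarrow(c)$: if the extension has nontrivial defect, I would show the improvement process \emph{never} terminates — each $f$ admits an $h$ strictly raising the valuation — because otherwise one of the normal forms (i)--(iii) would hold, and by the (yet to be shown, but elementary) converse each such form produces a wild or ferocious, hence defectless, extension, a contradiction. So best $f$ cannot exist.

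The main obstacle is the descent argument in $(c)\Rightarrow(b)$: showing that when $L/K$ is defectless the sequence of improvements $f,\,f+h_1^p-h_1,\,\dots$ stabilizes after finitely many steps. In the classical DVR case this is immediate because $v_A$ is $\Z$-valued and bounded below by $-\KSw$, but over an arbitrary Henselian valuation ring the value group can be dense, so one cannot argue by a well-ordering of values alone. The key is to translate ``the process does not terminate'' into the existence of a compatible system producing an immediate (defect) subextension — concretely, an infinite sequence $h_n$ with $v_A(h_n)\to$ a limit not attained, which forces $v_B(1/\alpha)$ to lie strictly between consecutive $\Gamma_A$-cosets in a way incompatible with $e_{L/K}f_{L/K}=p$. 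Making this ``non-termination $\Leftrightarrow$ defect'' link precise, presumably by invoking the structure theory from \cite{V1}, is where the real content lies; the normal-form computations around it are bookkeeping.
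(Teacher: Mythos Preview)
Your $(b)\Rightarrow(a)$ and (implicit) $(b)\Rightarrow(c)$ arguments match the paper's. The gap is in $(c)\Rightarrow(b)$, and it is exactly the obstacle you flag yourself: you propose to run the improvement process $f\mapsto f+h^p-h$ and argue that defectlessness forces termination, but you do not actually prove this, and over a value group that may be dense there is no obvious well-ordering to appeal to. Invoking ``non-termination $\Leftrightarrow$ defect'' via the structure theory of \cite{V1} is circular here, since that structure theory is precisely what this theorem is meant to feed into.

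The paper sidesteps the descent entirely. Assuming $(c)$ holds but $(b)$ fails for every generator, the first observation is that the wild case \emph{cannot occur}: if (b)(i) fails for all $f$, every generator has $p$-divisible valuation, hence $e_{L/K}=1$. So one is automatically in the ferocious (or unramified) situation, and since (b)(iii) also fails it is genuinely ferocious. Now take $\mu\in B^\times$ as in \Cref{cyclic}(b) and write a fixed Artin-Schreier generator as $\alpha=\sum_{j=0}^{p-1}x_j\mu^j$ with $x_j\in K$. By \Cref{muval}, for any $h\in K$ one has $v(\alpha+h)=\min\{v(x_0+h),\,v(x_j)_{j\neq 0}\}$, so the supremum over all $h$ is bounded by $\min_{j\neq 0}v(x_j)$ and is \emph{attained} at $h=-x_0$. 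This directly exhibits a best generator, contradicting the failure of $(a)$. There is no iteration and no termination issue: the $\mu$-basis converts the optimization over $h\in K$ into a single coordinate substitution. Your sketch in the ferocious case gestures at the $\mu$-basis but misses this punchline, and your separate treatment of the wild case is unnecessary once you notice that failure of $(b)$ already rules it out.
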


\begin{pf}
We will prove this in two stages - $(a) \iff (b)$ and $(b) \iff (c)$.

Suppose that best $f$ exists and  $(b)$ fails. This best $f$ must be of the form $(u^p+a)g^{-p}$ for some unit $u$ of $A$ and $a \in \m_A$. Consider $f'=f+h^p-h$ where $h=(-u)g^{-1}$. Then $v_A(f')=v_A(a+ug^{p-1})-pv_A(g)>v_A(f)=-pv_A(g)$ leading to the contradiction that $f'$ is better than $f$. 

Conversely, we will show that any $f$ satisfying one of the properties (i) -- (iii) is in fact best. This is clear in case (i) since $p \nmid v_A(f)$. It is also clear in case (iii) since any $f'$ generating $L/K$ satisfies $v_A(f')\leq 0=v_A(f)$. 

Now consider $f$ satisfying (ii) and assume that $f'=i(f+h^p-h)$ is better than $f$, that is, $v_A(f')>v_A(f)$. Therefore, $v_A(h)=-v_A(g)$ and $g^p(ug^{-p}+h^p-h) = u +(gh)^p-g^{p-1} (gh)$ is an element of $ \m_A$. This contradicts (ii) since the residue class of $u$ is a $p$ the power of the residue class of $gh$ in $k$. This proves the equivalence of $(a)$ and $(b)$.
\\

Assume that $(b)$ holds  and let $\al$ be an Artin-Schreier generator corresponding to such an $f$, that is, $\al^p-\al=f$. In the case (i), the ramification index of $L/K$ is $p$ and the quotient $v_B(L^{\times})/v_B(K^{\times})$ is generated by $v_B(\al)$. In the case (ii), the residue  extension is purely inseparable of degree $p$, generated by the the residue class of $\al g$, the $p$-th root in $l$ of the residue class of $u$.  In the case (iii), $L/K$ is unramified and the residue extension is separable of degree $p$. Thus, the extension $L/K$ is defectless.

Next, suppose that  $(c)$ is true but assume to the contrary that $(b)$ fails. Consequently, $(a)$ fails and best $f$ does not exist. Since $L/K$ is defectless, the ramification index $e_{L/K}=p$ or the inertia degree $[l:k]=p$.
Since $(b)$(i) is not true, any such $f \in K \backslash \m_A$ must have $p$-divisible valuation. Therefore, the ramification index is trivial and we must have $[l:k]=p$. 

Since $e_{L/K}=1$, let us denote by $v$ the valuation on both $K$ and $L$. Begin with an Artin-Schreier equation $\al^p-\al=f$ generating $L/K$. All the  Artin-Schreier generators of $L/K$ are of the form $i(\al+h)$ for some $h \in K$ and an integer $i \in \{1, \cdots, p-1\}$. We have $v(\al)=v(f)$, and since $(a)$ fails, $\sup v(i(\al+h))$ over all the generators is not attained by any generator.

Since $(b)$(iii) is not true, $L/K$ is not unramified. We can apply lemmas \ref{cyclic}(b) and \ref{mu}; let $\mu \in B^{\times}$ be as in these results. The elements $1, \mu, \cdots, \mu^{p-1}$ form a basis of $L/K$ while their residue classes form a basis of the residue extension $l/k$.

We can write the above $\alpha$ as $\dis \alpha=\sum_{j=0}^{p-1}x_j \mu^j$, where $x_j \in K$ for all $j$, not all zero. 
 Then  for  any $h \in K$ and any integer $i \in \{1, \cdots, p-1\}$, by \cref{muval}, we have 

$$v(i(\al+h))=v(\al+h)=v((x_0+h)+\sum_{j\neq 0} x_j\mu^j)=\min\{v(x_0+h),  v(x_j\mu^j)_{j\neq 0}\}.$$

\noindent Thus, $\dis \sup_{i, h} v(i(\al+h)) \leq \min_{j\neq 0} \{v(x_j\mu^j)\} = \min_{j\neq 0} \{v(x_j)\}$. 
\\This bound is attained when we consider $h=-x_0$ and we reach a contradiction.
\\
This proves the equivalence of $(b)$ and $(c)$, and concludes the proof.
\end{pf}
\noindent The following corollary was obtained in the proof of $(b) \implies (a)$.
\begin{sbcor}\label{bestase} Any $f$ satisfying (i) -- (iii) is best.
\end{sbcor}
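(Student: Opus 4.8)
The statement to prove is \Cref{bestase}: any $f$ satisfying one of the properties (i)--(iii) of \Cref{best f}(b) is best.

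The plan is to extract the relevant direction from the already-completed proof of \Cref{best f}, since the corollary is literally the content of the implication $(b)(\text{i})$--$(b)(\text{iii}) \Rightarrow (a)$ as carried out in the ``Conversely'' paragraph. I would organize it as a case analysis on which of (i), (ii), (iii) holds, in each case showing that no generator $f' = i(f + h^p - h)$ can satisfy $v_A(f') > v_A(f)$.

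First, in case (i), since $p \nmid v_A(f)$ and any competing generator $f'$ has the form $i(f + h^p - h)$ with $v_A(h^p - h) = p\,v_A(h)$ divisible by $p$, we have $v_A(f') \le v_A(f)$ by the non-archimedean inequality (equality of valuations forces $v_A(f+h^p-h) \ge v_A(f)$ only when no cancellation occurs, and cancellation requires matching valuations, which is impossible here since one is a multiple of $p$ and the other is not). Hence $v_A(1/f)$ is already the infimum and $f$ is best.

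Second, in case (iii), $f \in A^\times$ so $v_A(f) = 0$; since $L/K$ is non-trivial and $A$ is Henselian, every generator $f'$ lies in $K \setminus \m_A$, i.e. $v_A(f') \le 0 = v_A(f)$, so again $f$ is best trivially.

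Third, the substantive case (ii): suppose $f = u g^{-p}$ with $g \in \m_A \setminus \{0\}$, $u \in A^\times$, and $\bar u$ not a $p$-th power in $k$. Assume for contradiction that $f' = i(f + h^p - h)$ has $v_A(f') > v_A(f) = -p\,v_A(g)$. As observed in the excerpt (the remark after the best-$f$ definition), this forces $v_A(f) = v_A(h^p - h) = p\,v_A(h) < 0$, so $v_A(h) = -v_A(g)$, meaning $gh \in A^\times$. Then $g^p(f + h^p - h) = u + (gh)^p - g^{p-1}(gh) \in \m_A$, and reducing mod $\m_A$ gives $\bar u = -\overline{(gh)}^{\,p} = (-\overline{gh})^p$ (using $\ch k = p$), contradicting that $\bar u$ is not a $p$-th power in $k$. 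Therefore no better generator exists and $f$ is best.

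I do not anticipate a genuine obstacle, since the entire argument already appears inside the proof of \Cref{best f}; the only care needed is to state it cleanly as a standalone case analysis and to be precise about why $v_A(f') \le v_A(f)$ in cases (i) and (iii) and about the reduction $u + (gh)^p - g^{p-1}(gh) \in \m_A \Rightarrow \bar u$ is a $p$-th power in case (ii). I would therefore write the proof as essentially a pointer plus a compressed restatement.

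\begin{pf}
This is exactly the implication $(b)\Rightarrow(a)$ established in the proof of \Cref{best f}; we record the argument for convenience.

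Let $f$ satisfy one of (i)--(iii), and let $f' = i(f + h^p - h)$ be an arbitrary generator of $L/K$, with $h \in K$ and $i \in \{1,\dots,p-1\}$ (cf.\ \ref{allf}). We must show $v_A(1/f') \ge v_A(1/f)$, i.e.\ $v_A(f') \le v_A(f)$.

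If (i) holds, then $v_A(f)$ is not divisible by $p$, whereas $v_A(h^p - h) = p\,v_A(h)$ is. Hence $v_A(f + h^p - h) = \min\{v_A(f),\, p\,v_A(h)\} \le v_A(f)$, so $v_A(f') = v_A(f + h^p - h) \le v_A(f)$.

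If (iii) holds, then $v_A(f) = 0$. Since $L/K$ is non-trivial and $A$ is Henselian, $f' \in K \setminus \m_A$, so $v_A(f') \le 0 = v_A(f)$.

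Finally suppose (ii) holds, say $f = u g^{-p}$ with $0 \ne g \in \m_A$ and $u \in A^\times$ whose residue class is not a $p$-th power in $k$. Assume, for contradiction, that some generator $f' = i(f + h^p - h)$ has $v_A(f') > v_A(f) = -p\,v_A(g)$. Then $v_A(f + h^p - h) > v_A(f)$, which forces $v_A(f) = v_A(h^p - h) = p\,v_A(h) < 0$, hence $v_A(h) = -v_A(g)$ and $gh \in A^\times$. Multiplying through by $g^p$,
$$g^p(f + h^p - h) = u + (gh)^p - g^{p-1}(gh) \in \m_A.$$
Reducing modulo $\m_A$ and using $\ch k = p$, the residue class of $u$ equals $(-\overline{gh})^p$, contradicting the choice of $u$.

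In all three cases $v_A(f') \le v_A(f)$ for every generator $f'$, so $f$ is best.
\end{pf}
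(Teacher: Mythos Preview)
Your proof is correct and follows exactly the paper's approach: the corollary is simply the ``converse'' paragraph in the proof of \Cref{best f}, and your case analysis on (i), (ii), (iii) reproduces those arguments verbatim (the only cosmetic imprecision is writing $v_A(h^p-h)=p\,v_A(h)$, which holds only for $v_A(h)\le 0$, but the case $v_A(h)>0$ is trivial anyway since then $v_A(f+h^p-h)=v_A(f)$).
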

\subsection{Mixed characteristic case: Kummer extensions}\label{mx}
For this subsection we assume that $\ch K= 0,~ \ch k = p$. 
We have an analoguous result characterizing ``best'' generators in the mixed characteristic case when the base field $K$ contains a primitive $p$-the root $\ze:=\ze_p$ of unity.

Let $L/K$ be a non-trivial Kummer extension given by the equation $\alpha^p = h \in K$. Again, by slight abuse of terminology, we will say that $h$ generates $L/K$. 

Let $\dis \z:= \ze -1$ and $\dis e':=\frac{v_A(p)}{p-1}=v_A(\z).$

\subsubsection{}\label{allh} Given any two equations $\al^p=h$ and $\al'^p=h'$ generating the same Kummer extension $L/K$, there exists  $g \in K$ and an integer $i \in \{1, \cdots, p-1\}$ such that $h'=h^ig^p$, by Kummer theory.\\  
\\ 
 Since $L/K$ is non-trivial, any such $h$ must satisfy $v_A(h-1) \leq v_A(\z^p)=e'p$.
\\ 
For if $\al^p-1=h-1 = a\z^p$ for some $a \in \m_A$, then consider $\dis \be:=(\al-1)/\z \in L\backslash K$.\\ We have  $\dis a= 
((1+\z \be)^p-1)/\z^p \equiv \be^p-\be \mod \z$. Thus, $\be^p-\be \equiv 0 \mod \m_B$. Because $K$ is Henselian,  $\be$ and consequently $\al$ must be elements of $K$, contradicting non-triviality of $L/K$.
\\\\
Given $h, h'$ generating $L/K$, we observe that 
$\dis v_A(\z^p/(h-1)) \leq v_A(\z^p/(h'-1))$ if and only if $\dis v_A(h'-1) \leq v_A(h-1)$. If $h \in A$ and $h' \in K \backslash A$, then this inequality is trivially true. \\\\
Let $g \in K^{\times}, 1 \leq i \leq p-1$ be such that $h'=h^ig^p$. If $h \in A^{\times}$ and $g \nin A^{\times}$, then $\dis v_A(h'-1)\leq 0 \leq v_A(h-1)$.

\begin{sbdefn}
    
An element $h \in K$ generating $L/K$ as a Kummer extension is called \tit{best} if and only if $$  v_A\lb\frac{\z^p}{h-1}\rb=\inf_{g \in K^{\times}, 1 \leq i \leq p-1 } v_A\lb\frac{\z^p}{h^ig^p-1}\rb$$
equivalently, 
$$v_A(h-1)=\sup_{g \in K^{\times}, 1 \leq i \leq p-1 } v_A\lb h^ig^p-1\rb.$$
\end{sbdefn}
\begin{sbthm}\label{best h} The following are equivalent for a non-trivial Kummer extension $L/K$:

\begin{enumerate}[(a)]
\item  Best $h$ exists.
\item There exists $h \in A$ satisfying exactly one of the properties (i) -- (v).
\begin{enumerate}[(i)]
\item $p \nmid v_A(h) $.
\item $h=u$.
\item $h=1+ct;~ 0 <v_A(t)<e'p, p \nmid v_A(t) $.
\item $h=1+us^p;~ 0<v_A(s)<e'$. 
\item $h=1+c \z^p;~ \ol{c} \nin \{x^p-x \mid x \in k \}$.
\end{enumerate}
where $s, t \in \m_{A}, u, c \in A^{\times}, \ol{u} \nin k^p$.

\item The extension $L/K$ is defectless.
\end{enumerate}
\end{sbthm}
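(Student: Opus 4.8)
The plan is to mirror the structure of the proof of \Cref{best f}: establish $(a) \Leftrightarrow (b)$ first, then $(b) \Leftrightarrow (c)$. For $(b) \Rightarrow (a)$ I would verify directly that each of the five normal forms (i)--(v) already attains the supremum $\sup_g v_A(h^i g^p - 1)$. Forms (i) and (iii) are immediate because the relevant valuation is not $p$-divisible, so no modification by a $p$-th power $g^p$ can strictly increase it; form (v) is the Artin--Schreier-type obstruction at the boundary $v_A(h-1) = e'p$, handled exactly as in case (iii) of \Cref{best f} via the substitution $\beta = (\alpha-1)/\z$ and the computation in \ref{allh} showing $a \equiv \beta^p - \beta \bmod \z$; forms (ii) and (iv) are the analogues of case (ii) of \Cref{best f}, where one shows that a strict increase in valuation would force $\bar u$ (resp.\ the residue of the relevant unit) to be a $p$-th power in $k$. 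For $(a) \Rightarrow (b)$, assuming best $h$ exists I would argue that it can be brought into one of the five forms: if $v_A(h-1) < 0$ one reduces to $h \in A^\times$ or rescales by $g^p$ to make $v_A(h)$ either non-$p$-divisible (form (i)) or zero; if $0 < v_A(h-1) < e'p$ one separates according to whether $v_A(h-1)$ is $p$-divisible, and in the $p$-divisible subcase writes $h - 1 = us^p(1 + (\text{higher order}))$ and absorbs the error using Henselianness to land in form (iv), else form (iii); if $v_A(h-1) = e'p$ one writes $h = 1 + c\z^p$ and notes that if $\bar c \in \{x^p - x\}$ then the substitution from \ref{allh} produces a strictly better generator, contradicting bestness, so form (v) holds; and if $v_A(h-1) > e'p$ this is impossible by the non-triviality bound recalled in \ref{allh}.

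For $(b) \Rightarrow (c)$ I would exhibit, for each normal form, the explicit Kummer generator $\alpha$ with $\alpha^p = h$ and identify the ramification data. In forms (i) and (iii) the value group grows: $v_B(\alpha) = \tfrac{1}{p} v_A(h)$ (resp.\ involves $v_A(h-1)$ appropriately) is not in $\Gamma_A$, so $e_{L/K} = p$. In forms (ii) and (iv) the residue extension $l/k$ is purely inseparable of degree $p$: in form (ii) generated by the residue of $\alpha$, a $p$-th root of $\bar u \notin k^p$; in form (iv), after setting $\beta$ to be an appropriate rescaling of $\alpha - 1$ by a power of $s$, generated by the residue of $\beta$. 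In form (v) one sets $\beta = (\alpha-1)/\z$ and checks $\beta^p - \beta \equiv \bar c \bmod \m_B$ with $\bar c \notin \{x^p-x\}$, so the residue extension is separable of degree $p$ and $L/K$ is unramified. In every case $e_{L/K} f_{L/K} = p = [L:K]$, so the defect is trivial.

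The converse $(c) \Rightarrow (b)$ is the step I expect to be the main obstacle, exactly as in \Cref{best f}. The strategy is contrapositive: assume $(b)$ fails, hence (by the already-proved $(a)\Leftrightarrow(b)$) best $h$ does not exist, so for any generating equation $\alpha^p = h$ the supremum over $g, i$ of $v_A(h^i g^p - 1)$ is not attained. Since $(b)(\text{i})$ and $(b)(\text{iii})$ fail, the relevant valuations are always $p$-divisible and one deduces $e_{L/K} = 1$; so, as $L/K$ is defectless, $f_{L/K} = p$ and the residue extension is purely inseparable or separable of degree $p$. Since $(b)(\text{v})$ fails $L/K$ is not unramified, so \Cref{cyclic}(b) applies, giving $\mu \in B^\times$ whose powers $1, \mu, \dots, \mu^{p-1}$ form both a $K$-basis of $L$ and a $k$-basis of $l$. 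Writing the relevant element (either $\alpha$, or $(\alpha - 1)/\z^m$ for the appropriate shift depending on whether $v_A(h-1)$ is below or at $e'p$) as $\sum_j x_j \mu^j$ and invoking \Cref{muval} to compute $v\bigl(\sum_j x_j \mu^j + (\text{shift by an element of }K)\bigr) = \min_j v(x_j \mu^j)$, one shows the supremum over all generators is bounded above by $\min_{j \neq 0} v(x_j)$ and that this bound is actually attained by choosing the $K$-shift to kill the $j = 0$ term --- contradicting non-attainment. The delicate point, not present in the equal-characteristic case, is keeping track of whether one works with $\alpha$ itself or with a rescaled version of $\alpha - 1$, and checking that the two boundary regimes $v_A(h-1) < e'p$ and $v_A(h-1) = e'p$ both reduce cleanly to this basis argument; the multiplicative structure (modification by $g^p$ rather than additive $h^p - h$) requires taking logarithmic-type differences carefully, but once translated via $\beta = (\alpha-1)/\z^{\lceil \cdot \rceil}$ the computation parallels the Artin--Schreier case.
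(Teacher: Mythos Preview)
Your overall architecture --- $(a)\Leftrightarrow(b)$ then $(b)\Leftrightarrow(c)$, with the contrapositive $\mu$-basis argument for $(c)\Rightarrow(b)$ --- matches the paper's. Two specific steps diverge.

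In $(c)\Rightarrow(b)$ the paper does not rescale via $\beta=(\alpha-1)/\z^m$, nor split into regimes according to whether $v_A(h-1)$ lies below or at $e'p$; it stays purely multiplicative. After fixing an $i$ for which $\sup_{g}v(\alpha^ig-1)$ is not attained (such an $i$ exists since the global sup over the finitely many $i$ is not attained), it expands $\alpha^i-1=\sum_j x_j\mu^j$ and uses the identity
\[
\alpha^ig-1=(\alpha^i-1)g+(g-1)=(x_0g+g-1)+\sum_{j\ne0}(x_jg)\mu^j,
\]
so that $g:=(x_0+1)^{-1}\in A^\times$ (valid because one may assume $v(\alpha^i-1)>0$, forcing $x_0\in\m_A$) annihilates the $j=0$ term and realises the bound $\min_{j\ne0}v(x_j)$. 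Your proposed translation does not quite reduce to a pure additive $K$-shift: under $\alpha\mapsto\alpha g$ one gets $\beta\mapsto g\beta+(g-1)/\z^m$, which is affine in $\beta$, so the parallel with the Artin--Schreier argument is less direct than you suggest. It can be pushed through, but the paper's route is shorter and needs no case distinction.

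In $(a)\Rightarrow(b)$, ``absorb the error using Henselianness to land in form (iv)'' is not what the paper does, and it is unclear what lift you would invoke (note that $T^p-u'$ has no simple root modulo $\m_A$, so Hensel does not apply). The paper's move is the opposite direction: if a best $h$ satisfies none of (i)--(v) then, after reducing to $v_A(h-1)>0$, necessarily $h-1=(\lambda^p+a)s^p$ with $\lambda\in A^\times$, $a,s\in\m_A$, $v_A(s)<e'$; setting $g=(1+\lambda s)^{-1}$ one computes $v_A(hg^p-1)>v_A(h-1)$ directly, contradicting bestness. So when the unit in front of $s^p$ has residue in $k^p$ you do not land in form (iv); you exhibit a strictly better generator.
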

\begin{pf}
We will prove this in two stages - $(a) \iff (b)$ and $(b) \iff (c)$. Unless mentioned otherwise, $i, g$ in this proof satisfy $1 \leq i \leq p-1$ and $g \in K^{\times}$. When we compare two elements $h, h'=h^ig^p$, we may restrict our attention to the case $h, h' \in A$.\\

Suppose that best $h$ exists, but $h$ itself does not satisfy any of the properties (i) -- (v). Let  $\al^p=h$ and $\be:=(\al-1)/\z$. If $v_A(h)>0$, then $p \mid v_A(h)$ and we can replace $h$ with a unit $u$ that satisfies (ii). If $v_A(h)=v_A(h-1)=0$, then $L/K$ is a trivial extension unless $h$ satisfies (ii). This leaves us with the case $v_A(h-1)>0$. 

If $h-1=c \z^p$ with $c \in A^{\times}, \ol{c} \in \{x^p-x \mid x \in k \}$, then we have $c \equiv \be^p-\be \mod \z$, contradicting non-triviality of $L/K$.
Thus, the only possibility is that $h-1=(\lambda^p+a)s^p$ for some $\lambda \in A^{\times}, a,s \in \m_A; v_A(s)<v_A(\z)$.  Let $g=1/(\lambda s+1) \in A^{\times}$. 

For some $x \in A$, we have
$$\frac{hg^p-1}{s^p}=\frac{(\lambda^p+a)s^p+1-(\lambda s+1)^p}{s^p(\lambda s+1)^p}=\frac{as^p+(p \lambda s) x}{s^p(\lambda s+1)^p}=\frac{a+\lambda x (ps/s^p)}{(\lambda s+1)^p}.$$

\noindent Since $a, (ps/s^p) \in \m_A$ and $g$ is a unit, $v_A(s^p)=v_A(h-1)< v_A(hg^p-1)$, contradicting the assumption that $h$ is best.
\\\\
For the converse, we will show that any $h \in A$ satisfying one of  (i) -- (v) is best.
\\
Note that $h'=h^ig^p$ is ``better" than $h$ if and only if $v_A(h^ig^p-1)>v_A(h-1)$.
Since $L/K$ is non-trivial, any $h$ satisfying (v) must be best. We will consider the remaining cases now.
\begin{enumerate}[(i)]
\item  If $h'=h^ig^p$ is better, then $v_A(h^ig^p-1)>v_A(h-1)=0$. This is only possible if $h^ig^p$ is a unit. Since $p \nmid v_A(h)$, $h$ must be best.
\item Let $h=u  \in A^{\times}, \ol{u} \nin k^p$.  If $h'=u^ig^p$ is better, then $u^ig^p \equiv 1 \mod \m_A$. This is impossible since $\ol{u} \nin k^p$.
\end{enumerate}
Assume that $h$ satisfies (iii) or (iv) and that $h'=h^ig^p$ is better than $h$. Since $h$ is a unit, $g$ must also be a unit for this to be true. 

Writing $\dis h^ig^p-1=(h-1+1)^ig^p-1=(h-1)^2x+i(h-1)g^p+g^p-1; x \in A$ we see that $\dis v_A(h^ig^p-1)>v_A(h-1)$ if and only if $\dis v\lb ig^p+\frac{g^p-1}{h-1}\rb>0$. Since $ ig^p$ is a unit, this can only happen if $\dis \frac{g^p-1}{h-1}$ is a unit as well, that is, if $\dis v_A(g^p-1)=v_A(h-1)<v_A(\z^p)$.\\\\
If $v_A(g-1) \geq v_A(\z)$, then
$$v_A(g^p-1)=v_A(((g-1)+1)^p -1) = v_A((g-1)^p+p(g-1)^{p-1}+\cdots+p(g-1))\geq v_A(\z^p).$$ Thus, we must have $v_A(g-1)<v_A(\z)$.\\ Consequently, $v_A(g^p-1)=v_A((g-1)^p)=pv_A(g-1)=v_A(h-1)$. As this requires $v_A(h-1)$ to be $p$-divisible, such an $h$ cannot satisfy (iii) (in other words, any $h$ satisfying (iii) is best). 
\\\\
Let $\dis h-1=us^p; s \in \m_A, u  \in A^{\times}, \ol{u} \nin k^p$. Since $\dis v\lb ig^p+\frac{g^p-1}{h-1}\rb>0$, there exists $\dis a \in \m_A$ such that 
$\dis ig^p+\frac{g^p-1}{us^p}=a$. Thus, $\dis u=\frac{au}{ig^p}-\frac{(g^p-1)}{is^pg^p}$. \\\\ As $p \in \m_A$ and $p \mid i^p-i$, modulo $\m_A$ we have
$$u\equiv (-1) \frac{(g^p-1)}{is^pg^p} \equiv \lb\frac{(-1)(g-1)}{isg}\rb^p$$
contradicting $\ol{u} \nin k^p$. This concludes the proof of $(b) \implies (a)$.
\\
\\
We will now prove $(b) \implies (c)$.
\\
In the cases (i) and (iii), $v_B(L^{\times})/v_B(K^{\times})$ is generated by images of $v_B(\al)$ and $v_B(\al-1)$,  respectively.  Thus, the ramification index is $p$ and the extension is defectless. 
\\
We next observe that in the remaining cases the inertia degree is $p$ and hence, the extension is defectless. 
\\
In the case (ii), the residue extension is purely inseparable of degree $p$, generated by the class of $u$. Similarly in the case (iv), it is generated by the class of $(\al-1)/s$. 
\\
In the case (v), write $\dis \al=\z \be+1$. Hence, $\dis c\equiv \be^p-\be \mod (\z)$ and $l/k$ is separable, generated by the roots $\dis \{\ol{\be}+j\}_{0 \leq j \leq p-1}$ of the polynomial $\dis T^p-T-\ol{c} \in k[T]$.
\\

Next, we prove $(c) \implies (b)$. The proof is similar to that in the equal characteristic case. 
Let $\al^p=h$ generate $L/K$. Assume that the extension is defectless, but $(b)$ fails. Consequently, $(a)$ fails and $\dis \sup_{g  \in K^{\times}, i}v_A(h^ig^p-1)$ is not attained.
 Note that 
 $$ \sup_{g  \in K^{\times}, i}v_A(h^ig^p-1)=\sup_{g  \in K^{\times}, i,  h^ig^p \in A^{\times}}v_A(h^ig^p-1).$$ Thus, the $K$-valuation $v_A(h^ig^p-1)$ must always be $p$-divisible.  In this case, we see that the ramification index is $1$, and therefore, the inertia degree must be $p$. We'll denote by $v$ the valuations on both $L$ and $K$. 
 \\
 Observe that
$\dis \sup_{g  \in K^{\times}, i}v(\al^ig-1)$ is not attained. Hence, for some fixed $i$, $\dis \sup_{g  \in K^{\times}}v(\al^ig-1)$ is not attained either. We will use such an $i$ later in the proof.
\\\\
Note that the extension $L/K$ must be ramified. Once again, we can apply the lemmas \ref{cyclic}(b) and \ref{mu}. Let $\mu \in B^{\times}$
be as in these results.
 Again, we see that $1, \mu,  \cdots, \mu^{p-1
 }$ form a $K$-basis of $L$. We use \cref{muval}, as before.
\\
 Let $i$ be such that $\sup_{g  \in K^{\times}}v(\al^ig-1)$ is not attained and write 
 $$\al^i-1=\sum_j x_j\mu^j=x_0+\sum_{j \neq 0} x_j\mu^j.$$ 
 
 \noindent Since $v(\al^i-1)>0$ (we can assume this since the $\sup$ is not attained), $x_j \in \m_A$ for all $j$.\\ 
For $g \in K^{\times}$, we can write

$$\al^i g -1=(\al^i-1)g+(g-1)=(x_0g +g-1)+\sum_{j \neq 0} (x_jg)\mu^j.$$ 

\noindent Next, we see that $\dis \sup_{g  \in K^{\times}}v(\al^i g-1)=\sup_{g  \in A^{\times}}v(\al^i g-1)$. \\
For $g \in A^{\times}$, we have 
$$ v(\al^i g-1) = \min\{v(x_0g+g-1), v(x_1), \cdots, v(x_{p-1})\} \leq \min_{j \neq 0}\{v(x_j)\}.$$
 Thus, 
$\dis \sup_{g  \in A^{\times}}v(\al^i g-1) \leq \min_{j \neq 0}\{v(x_j)\}$. 
\\\\
Since $x_0 \in \m_A, x_0+1 \in A^{\times}$. Let $g:=(x_0+1)^{-1} \in A^{\times}$. 
\\Then $ x_0g+g-1=(x_0+1)g-1=0$ and the upper bound $\min_{j \neq 0}\{v(x_j)\}$ is attained. This contradicts the assumption on $i$ and concludes the proof.
\end{pf}

\noindent The following corollary was obtained in the proof of $(b) \implies (a)$.
\begin{sbcor}\label{bestkummer} Any $h$ satisfying (i) -- (v) is best.
\end{sbcor}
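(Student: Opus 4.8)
The plan is to observe that this corollary was already established \emph{en route} in the proof of \Cref{best h}: the ``converse'' half of the equivalence $(a)\iff(b)$ there is precisely the assertion that every $h\in A$ satisfying one of (i)--(v) is best, so I would simply recall that case analysis. Throughout, by \ref{allh} one may restrict to comparing $h$ with another generator $h'=h^ig^p\in A$ (with $1\le i\le p-1$, $g\in K^\times$), and $h'$ is strictly better than $h$ exactly when $v_A(h^ig^p-1)>v_A(h-1)$; the goal in each case is to rule this out, i.e.\ to show the supremum defining ``best'' is attained at $h$ itself.

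First I would dispose of the ``rigid'' cases. Case (v): by \ref{allh} every non-trivial Kummer generator $h'$ satisfies $v_A(h'-1)\le v_A(\z^p)=e'p$, and an $h=1+c\z^p$ already attains this bound, so it is best. Case (i): a better $h'$ would have $v_A(h'-1)>0$, hence $h'$ a unit, but $v_A(h^ig^p)=i\,v_A(h)+p\,v_A(g)$ cannot vanish when $p\nmid v_A(h)$. Case (ii): likewise a better $h'=u^ig^p$ must satisfy $u^ig^p\equiv 1 \bmod \m_A$, forcing $\bar u^i\in(k^\times)^p$, hence $\bar u\in(k^\times)^p$ since $\gcd(i,p)=1$ --- contradicting $\bar u\notin k^p$.

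For cases (iii) and (iv), $h$ is a unit, so a better $h'$ forces $g\in A^\times$; expanding $h^ig^p-1=(h-1)^2x+i(h-1)g^p+(g^p-1)$ with $x\in A$ turns the inequality into $v_A\!\big(ig^p+\tfrac{g^p-1}{h-1}\big)>0$, and since $ig^p$ is a unit this forces $v_A(g^p-1)=v_A(h-1)<v_A(\z^p)$. A binomial expansion shows $v_A(g-1)\ge v_A(\z)$ would give $v_A(g^p-1)\ge v_A(\z^p)$; hence $v_A(g-1)<v_A(\z)$, and then $v_A(g^p-1)=p\,v_A(g-1)$ is $p$-divisible, which already contradicts (iii). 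In case (iv), writing $h-1=us^p$ and rearranging $ig^p+\tfrac{g^p-1}{us^p}=a$ (with $a\in\m_A$) yields $\bar u=\big(\tfrac{-(g-1)}{isg}\big)^p\in k^p$ modulo $\m_A$ --- using that $p\in\m_A$ and that $i^p=i$, $(-1)^p=-1$ in $\F_p\subseteq k$ --- contradicting $\bar u\notin k^p$. In each case no better generator exists, so $h$ is best.

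Since every step is already carried out inside the proof of \Cref{best h}, there is no genuine obstacle; the only mildly delicate point is the binomial bookkeeping in (iii)/(iv) showing that $v_A(g-1)<v_A(\z)$ forces $v_A(g^p-1)=p\,v_A(g-1)$ exactly, together with the elementary observation that every element of $\F_p$ is a $p$-th power, which is what lets the factor $-1/i$ be absorbed into the $p$-th power in case (iv).
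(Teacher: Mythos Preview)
Your proposal is correct and follows essentially the same approach as the paper: the paper likewise states that this corollary was obtained in the proof of $(b)\Rightarrow(a)$ of \Cref{best h}, and your case-by-case recap matches that argument faithfully (with a few helpful clarifications, e.g.\ why $\gcd(i,p)=1$ lets $\bar u^i\in k^p$ force $\bar u\in k^p$, and why the factor $-1/i$ is absorbed into the $p$-th power in case (iv)).
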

\section{Artin-Schreier extensions with higher rank valuations}\label{high}
Let the notation be as in \ref{N1}, \ref{sup}. For this section, we assume that $\ch K= \ch k = p$ and that $L/K$ is a non-trivial Artin-Schreier extension.
\\\\
In \cite{V1}, we assumed that $L/K$ is either defectless (with valuation of any rank) or has non-trivial defect and valuation of rank $1$. This restriction in the defect case is removed later in \cite{V2} while proving the mixed characteristic analogs.
In \cite[\S 8]{V2} we also remark that the results of \cite{V1} hold true for defect extensions with  valuations of a higher rank. This generalization follows once we prove \cite[Theorem 0.3]{V1} in this case. We will present the proof in \cref{pfhn}.

\subsection{Review of general definitions} 
We introduced the following definitions in \cite{V1}.
\begin{sbdefn}\label{js}
The ideal $\js$ of $B$ defined below 
generalizes the log Lefschetz number $j(\si)$ and is independent of the choice of the generator $\si$ of $\Gal(L/K)$.
$$
\js := \ \lb \left\{ \frac{\si(b)}{b}-1 \mid b \in L^{\times}
\right\} \rb. 
$$
\end{sbdefn}
\begin{sbdefn}
    
 The ideal $\sH$ of $A$ generalizes Kato's Swan conductor.
For an Artin-Schreier extension $L/K$, 
$$
\sH:= \ \lb \left\{ \frac{1}{h} \mid h   \in K^{\times}  \text{ and the solutions of
the equation } T^p-T=h \text{ generate } L/K \right\}
\rb.$$\end{sbdefn}

\noindent We note that the above $\sH$ was denoted by $H$ in \cite{V1} and its mixed characteristic analog was denoted by $\cH$  in \cite{V2}.\\

\begin{sbrem} While not apparent from the definition, $\sH$ is indeed an integral ideal. This is analogous to the fact that the classical Swan conductor is a non-negative integer.\end{sbrem}

\subsection{Trace}
The following lemma will be used in the proof of \Cref{hn}.
\begin{sblem}[See 6.3 \cite{KKS}]\label{trace}
Let $R$ be an integrally closed integral domain with the field
of fractions $F$. Let $E/F$ be a separable extension of fields of
degree $n$. Suppose that $\beta \in E$ is such that $E=F(\beta).$ 
Let $g(T)= \min_F(\beta)$, the minimal polynomial of $\beta$ over
$F$. Then

\begin{enumerate}
\item $\Tr_{E/F}\lb \frac{\beta^m}{g'(\beta)}\rb$ is zero for all $1
\leq m \leq n-2$ and $\Tr_{E/F}\lb \frac{\beta^{n-1}}{g'(\beta)}\rb
=1$.
\item Assume, in addition, that $\beta$ is integral over $R$. Then
$$ \{ x \in E \mid \Tr_{E/F}(xR[\beta]) \subset R \} =
\frac{1}{g'(\beta)}R[\beta].$$
\end{enumerate}
\end{sblem}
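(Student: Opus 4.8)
The statement to be proved is \Cref{trace}, the standard duality/trace formula for $R[\beta]$ when $\beta$ generates a separable extension. My plan is to follow the classical Euler--Jacobi approach via Lagrange interpolation, which gives both parts at once and requires no hypothesis beyond separability and (for part (2)) integrality.

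First I would write down the partial-fraction identity. Let $\beta_1,\dots,\beta_n$ be the conjugates of $\beta=\beta_1$ in a splitting field, so $g(T)=\prod_{i=1}^n (T-\beta_i)$ and $g'(\beta_j)=\prod_{i\neq j}(\beta_j-\beta_i)$. For $0\le m\le n-1$ Lagrange interpolation of the polynomial $T^m$ at the nodes $\beta_1,\dots,\beta_n$ gives
$$T^m=\sum_{j=1}^{n}\beta_j^m\,\frac{g(T)}{g'(\beta_j)(T-\beta_j)}.$$
Comparing the coefficient of $T^{n-1}$ on both sides yields $\sum_{j=1}^n \beta_j^m/g'(\beta_j)=0$ for $0\le m\le n-2$ and $=1$ for $m=n-1$. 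Since $\Tr_{E/F}(\beta^m/g'(\beta))=\sum_{j=1}^n \beta_j^m/g'(\beta_j)$ (the conjugates of $\beta^m/g'(\beta)$ are exactly the $\beta_j^m/g'(\beta_j)$, as $g'(\beta_j)$ is obtained from $g'(\beta)$ by applying the embedding sending $\beta\mapsto\beta_j$), this proves part (1) for $1\le m\le n-2$ and $m=n-1$; the case $m=0$ is not needed but follows too.

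For part (2), I would argue by a double inclusion. Assume $\beta$ is integral over $R$, so $g(T)\in R[T]$ and $R[\beta]$ is a free $R$-module with basis $1,\beta,\dots,\beta^{n-1}$. For ``$\supseteq$'': given $y=h(\beta)\in R[\beta]$ with $h\in R[T]$ of degree $\le n-1$, one computes $\Tr_{E/F}(\beta^k h(\beta)/g'(\beta))$ for $0\le k\le n-1$ using part (1); each such trace is the coefficient extraction of a polynomial of degree $\le 2n-2$, and since $h$ has $R$-coefficients the result lies in $R$. Hence $\frac{1}{g'(\beta)}R[\beta]$ is contained in the stated dual module. For ``$\subseteq$'': suppose $x\in E$ satisfies $\Tr_{E/F}(x R[\beta])\subseteq R$. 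Write $x=g'(\beta)^{-1}\sum_{k=0}^{n-1} c_k\beta^k$ with $c_k\in F$ (possible since $g'(\beta)$ is a unit in $E$ by separability and $1,\beta,\dots,\beta^{n-1}$ is an $F$-basis). Pairing $x$ against $\beta^{n-1},\beta^{n-2},\dots,1$ in turn and using part (1), the trace conditions become a unitriangular linear system forcing $c_{n-1}=\Tr(x)\in R$, then $c_{n-2}\in R$, and so on down to $c_0\in R$; hence $x\in \frac{1}{g'(\beta)}R[\beta]$.

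The only genuinely delicate point is making sure the conjugation bookkeeping is right: one must check that applying an $F$-embedding $E\to \bar F$ sending $\beta\mapsto\beta_j$ sends $g'(\beta)$ to $\prod_{i\neq j}(\beta_j-\beta_i)=g'(\beta_j)$ and $\beta^m/g'(\beta)$ to $\beta_j^m/g'(\beta_j)$, so that the trace really is the symmetric sum appearing in the interpolation identity; this uses separability to guarantee $n$ distinct embeddings and that $g'(\beta)\neq 0$. Everything else is routine polynomial algebra. Since the statement is attributed to \cite[6.3]{KKS}, an alternative acceptable to the reader would simply be to cite that reference and sketch the Lagrange-interpolation argument in a sentence; but I would include the short self-contained proof above for completeness.
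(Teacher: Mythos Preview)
The paper does not actually prove \Cref{trace}; it merely records the statement with a citation to \cite[6.3]{KKS} and then uses part (1) in the proof of \Cref{hn}. Your proposal, by contrast, supplies a complete self-contained argument via the Euler--Lagrange interpolation identity, which is the standard route and is correct as written. The only step you left slightly implicit is in the ``$\supseteq$'' direction of part (2): for $n\le m\le 2n-2$ one must reduce $\beta^m$ modulo $g(\beta)=0$ to an $R$-linear combination of $1,\beta,\dots,\beta^{n-1}$ (possible since $g$ is monic with coefficients in $R$), after which part (1) applies; this is routine and your phrase ``coefficient extraction of a polynomial of degree $\le 2n-2$'' covers it. In short, your argument is sound and strictly more than what the paper provides; if you wish to match the paper's treatment you could simply cite \cite{KKS}, but including your proof does no harm.
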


\subsection{Proof of \Cref{hn} for higher rank valuations}\label{pfhn}
 \noindent \Cref{hn} below is proved in \cite[\S 4]{V1} for defectless extensions without any assumptions on the rank of the valuation and for defect extensions assuming that the rank of the valuation is $1$. We now present a proof that also works for higher rank valuations. Its mixed characteristic analog is proved in \cite[\S 4]{V2} without any assumption on the rank of the valuation.
 
\begin{sbthm}\label{hn} We have
the following equality of ideals of $A$, generalizing  \ref{Kswan}.:
$$
  \sH =  \lb N_{L/K}(\js)\rb 
$$
\end{sbthm}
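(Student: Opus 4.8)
The plan is to prove the equality $\sH = (N_{L/K}(\js))$ of ideals of $A$ by establishing the two inclusions separately, exploiting the fact that in the defectless case the statement reduces to the classical norm computation of \ref{Kswan}, while the new content is the defect case where no single ``best $f$'' exists and both sides are genuinely non-principal. First I would recall the basic structure: a generator $f = 1/h$ of $\sH$ comes from an Artin--Schreier equation $\al^p - \al = h$ with $\al$ an Artin--Schreier generator of $L/K$, and the associated element $\si(\al)/\al - 1$ lies in $\js$ (here $\si(\al) = \al + i$ for a suitable $i \in \{1,\dots,p-1\}$, so $\si(\al)/\al - 1 = i/\al$). The key identity linking the two sides is that $N_{L/K}(\al) = (-1)^p(\al^p - \al) \cdot(\text{unit})$ up to sign, i.e. $v_A(N_{L/K}(1/\al)) = v_A(1/h)$, which is exactly the norm computation already carried out in \ref{Kswan}; the point is to upgrade this from a statement about valuations of single elements to an equality of the ideals they generate.

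For the inclusion $\sH \subseteq (N_{L/K}(\js))$: given any $h \in K^\times$ whose Artin--Schreier equation generates $L/K$, pick the corresponding root $\al$. Then $i/\al \in \js$ for the appropriate $i$, so $N_{L/K}(i/\al) = i^p N_{L/K}(1/\al) \in (N_{L/K}(\js))$ (using that $N_{L/K}$ is multiplicative and $i \in A^\times$), and $N_{L/K}(1/\al)$ equals $1/h$ up to a unit of $A$ by the Artin--Schreier minimal polynomial $T^p - T - h$ evaluated appropriately. Hence $1/h \in (N_{L/K}(\js))$, giving the inclusion on generators, which suffices since these $1/h$ generate $\sH$ by definition. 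For the reverse inclusion $(N_{L/K}(\js)) \subseteq \sH$: here the main work is needed. The ideal $\js$ is generated by elements $\si(b)/b - 1$ for $b \in L^\times$. I would first reduce to understanding $N_{L/K}(\si(b)/b - 1)$ for a well-chosen cofinal/generating family of $b$. A natural move is to use \Cref{trace} (the trace-dual/different computation) together with the structure of $B$ over $A$: when the extension is defectless, \Cref{cyclic} and \Cref{mu} give $B = A[\mu]$ or a principal value-group generator, and one can compute $\js$ explicitly and match it against $\sH$ via the norm, recovering \ref{Kswan} ideal-theoretically. When the extension has defect, $B$ is not monogenic, but one can still stratify elements $b \in L^\times$ by their valuation and, writing $b$ in terms of an Artin--Schreier generator $\al$ (so $L = K(\al)$), expand $\si(b)/b - 1$ and show $v_A(N_{L/K}(\si(b)/b-1)) \ge v_A(1/h_b)$ for some admissible $h_b$ generating $L/K$, possibly after replacing $\al$ by a translate $\al + g$.

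The hard part will be the defect case of the inclusion $(N_{L/K}(\js)) \subseteq \sH$, precisely because neither ideal is principal and the usual trick ``choose the best generator, compute one norm'' is unavailable — this is exactly why the rank-one hypothesis appeared in \cite{V1}. The rank-one proof presumably used that a rank-one value group embeds in $\R$, so one can take infima/suprema of values and approximate; for higher rank one must instead argue ideal-theoretically, showing that for every $x \in \js$ there is an Artin--Schreier generator $h$ of $L/K$ with $N_{L/K}(x) \in (1/h)$, and conversely. I expect the technical engine to be: (1) a normal form for elements of $L^\times$ of a given valuation, obtained by repeatedly subtracting off ``leading terms'' in an Artin--Schreier basis expansion — essentially a Teichmüller-type or Ostrowski-type argument valid for arbitrary Henselian valuation rings; (2) the observation that $N_{L/K}(\si(b)/b - 1) = \prod_{\tau \in \Gal(L/K)} \tau(\si(b)/b - 1)$ telescopes or simplifies because $\si$ generates the cyclic group of order $p$; and (3) a careful bookkeeping that the translate of $\al$ needed to ``clear'' the $x_0$-term (as in the proofs of \Cref{best f} and \Cref{best h}) corresponds to multiplying $h$ by $g^p - g$, staying within the set of admissible generators. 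Once these are in place, combining the two inclusions yields $\sH = (N_{L/K}(\js))$ with no restriction on the rank, which is what \cite[Theorem 0.3]{V1} asserts and what is needed to transport the remaining results of \cite{V1} to the higher-rank defect setting.
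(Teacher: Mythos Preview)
Your treatment of the defectless case and of the inclusion $\sH \subset (N_{L/K}(\js))$ is essentially the paper's argument. The gap is in the reverse inclusion in the defect case, which is the whole point of the theorem here.

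Your plan is to fix an Artin--Schreier generator $\al$, expand an arbitrary $b$ in the $K$-basis $1,\al,\dots,\al^{p-1}$, and then adjust $\al$ by a translate to ``clear the $x_0$-term'' as in \Cref{best f} and \Cref{best h}. That mechanism is only available when one has the valuation identity $v\bigl(\sum x_j\mu^j\bigr)=\min_j v(x_j\mu^j)$, which came from \Cref{muval} and rests on $e_{L/K}=1$ together with the existence of a $\mu$ whose residue class generates $l/k$. In the defect case there is no such $\mu$, $B$ is not $A[\al]$, and the valuation of a $K$-linear combination of powers of $\al$ is not controlled by the minimum of the term-valuations; indeed that failure is essentially what defect means. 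So neither your item (1) (a normal form via basis expansion) nor item (3) (translating $\al$ to clear a coefficient) has traction here, and item (2), the telescoping of the norm, does not by itself produce an Artin--Schreier $h$ with $N_{L/K}(\si(b)/b-1)\in(1/h)$.

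The paper's idea, which you do not have, is to manufacture from each $b\in B^\times\setminus A$ a \emph{new} Artin--Schreier generator tailored to $b$: with $g=\min_K(b)$, set $\gamma=b^{p-1}/g'(b)$ and $y=(\si-1)^{p-2}(\gamma)$. The trace identity of \Cref{trace} gives $(\si-1)(y)=\tr(\gamma)=1$, so $\si(y)=y+1$ and $1/N(y)\in\sH$ automatically. The remaining work is a valuation comparison $v(\si(b)-b)\ge v(1/y)$, carried out not in $B$ but in the \emph{finitely generated} subring $B_b=A[\si^i(b):0\le i\le p-1]$, where the relevant ideal $\js_{,b}B$ is principal; this is what lets the argument run without rank-one suprema. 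Your proposal mentions \Cref{trace} but only as a vague ``different computation''; the specific use of $(\si-1)^{p-2}$ to produce $y$, and the passage to $B_b$ to recover principality, are the missing ideas.
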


\begin{proof} 
If $L/K$ is defectless, then $\dis \js=\lb \frac{1}{\alpha} \rb$ and $\dis \sH=\lb \frac{1}{f} \rb$, where $f$ is best and $\alpha^p-\alpha=f$ generates $L/K$. Therefore, 
$\dis N_{L/K}(\js)=\lb N_{L/K}\lb\frac{1}{\alpha}\rb \rb=\sH$. (This was proved in \cite{V1}.)
\\\\
For the rest of the proof, assume that $L/K$ is a defect extension.\\
The inclusion $\sH \subset  \lb N_{L/K}(\js)\rb$ is already  proved in \cite{V1} without any restrictions on the rank of the valuation. We include the proof below for the sake of completeness.
\\\\
Let $h \in  K^{\times}$ such that $\alpha^p-\alpha=h$ and $L=K(\alpha)$. 
Then $N(\alpha)=h$. Since $\js$ is independent of choice of $\sigma$ (\ref{js}), we may choose $\sigma$ such that $\sigma(\alpha)=\alpha+1$, for the computation below.
We have  $$N\lb \frac{\sigma(\alpha)}{\alpha}-1\rb = N\lb \frac{\alpha+1}{\alpha}-1\rb = N\lb\frac{1}{\alpha}\rb=\dis \frac{1}{h} \in \lb N_{L/K}(\js)\rb.$$ 
\medskip

\noindent Next, we prove the reverse inclusion $\lb N_{L/K}(\js)\rb \subset \sH$. 
Given any $b  \in L^{\times} \backslash K$, we want to prove that $\dis N\lb\frac{\si(b)}{b}-1\rb \in \sH$. 
Since the defect is non-trivial, we have $e_{L/K}=1$, and hence, $L=B^{\times}K$. So it is enough to show this for $b  \in B^{\times} \backslash A$.
Let $v$ denote the valuations on both $L$ and $K$.
\\\\
Consider the formal expression $\dis \tr_{L/K}=\tr= \frac{\si^p-1}{\si-1}= (\si- 1)^{p-1}.$ \\ 
\\ Let $g(T)=\min_K(b)$. Define $$ \ga_b=\ga:=\lb \frac{b^{p-1}}{g'(b)} \rb \text{ and }  ~y_b=y:=  (\si- 1)^{p-2}(\ga). $$  Then $(\si-1)(y)=\tr(\ga)=1$ (by \cref{trace}), i.e., $\si (y)= y+1$.\\ Hence, $y$ satisfies the Artin-Schreier equation $y^p-y=N(y) \in K$ and  $L=K(y)$.\\\\\\
From the definition of $\sH$, it follows that $\dis \frac{1}{N(y)} =  N \lb\frac{\si(y)}{y}-1\rb \in \sH$.\\\\\\
It is now enough to show that $\dis \lb N\lb\frac{\si(b)}{b}-1\rb \rb \sub  \lb N \lb\frac{\si(y)}{y}-1\rb \rb.$
\\\\\\
Let us compare   $\dis v\lb\frac{\si(b)}{b}-1\rb=v(\si(b)-b)=:s$  and $\dis v\lb\frac{\si(y)}{ y}-1\rb =v\lb\frac{1}{ y}\rb =:s'$.\\

\noindent For $1 \leq i \leq p-1$, let
$\dis v\lb (\si- 1)^{i}(\ga)\rb =v\lb (\si- 1)^{i-1}(\ga)\rb +c_i$; 
$c_i \geq 0$. 
\\
Then we have the following.
\begin{enumerate}[(i)]
\item $v(\ga)=-(p-1)s\Rar \sum_{i=1}^{p-1}c_i=(p-1)s$.
\item $\dis\sum_{i=1}^{p-2}c_i=v(y)-v(\ga)=v(y)+(p-1)s$. 
\item  $c_{p-1}=v((\si-1)(y))-v(y)=-v(y)=s'$. 
\end{enumerate}

\noindent Consider $B_b:= A[ \{ \si^i(b) \mid 1 \leq i \leq p-1 \}] \subset B$. It is invariant under the action of $\si^j - 1$ for all $1 \leq j \leq p-1$. 
For the ideal ${\js}_{,b}
:= \lb \{\si(t)-t \mid t \in B_b \} \rb$ of $B_b,$ the ideal $ {\js}_{,b}B$ of $B$ is finitely generated and therefore, principal.\\\\
Observe that
$$\ga =\frac{b^{p-1} N(g'(b))/g'(b)}{N(g'(b))} \in \lb \frac{1}{N(g'(b))} \rb B_b.$$
\\\\
\noindent Therefore, 
 $c_i \geq s= v(\si(b)-b)$ for all $1 \leq i \leq p-2$.  \\\\Consequently, we have $s+v(y)=s-s'=s-c_{p-1} \geq 0$.\\\\
Since $\dis s \geq s'$, we get  $\dis \lb\frac{\si(b)}{b}-1\rb \sub  \lb\frac{\si(y)}{y}-1\rb$.\\\\\\ Hence, we have
$\dis \lb N\lb\frac{\si(b)}{b}-1\rb \rb \sub  \lb N \lb\frac{\si(y)}{y}-1\rb \rb.$

\end{proof}

\subsection*{Acknowledgments} The author is grateful to Dr. Mrudul Thatte for her unwavering support during difficult times.\\
The author is supported by UKRI grant MR/T041609/2.

\medskip

\noindent Vaidehee Thatte\\Department of Mathematics\\King's College London\\Strand, London WC2R 2LS, United Kingdom\\
Contact: \texttt{vaidehee.thatte@kcl.ac.uk}
 \\
\end{document}